\numberwithin{equation}{section}
\newtheorem{lem}{Lemma}
\newtheorem{lemma}[lem]{Lemma}
\newtheorem{thm}{Theorem}
\theoremstyle{remark}
\newtheorem*{remark}{Remark}
\begin{document}
\title{
On the probability of forming polygons from a broken stick}
\author{William Verreault}

\address{
William Verreault \\
D\'epartement de math\'ematiques et de statistique \\ 
Universit\'e Laval   \\ 
Qu\'ebec G1V 0A6, Canada}
\email{william.verreault.2@ulaval.ca}

\begin{abstract}
     Break a stick at random at $n-1$ points to obtain $n$ pieces. 
     We give an explicit formula for the probability that every choice of $k$ segments from this broken stick can form a $k$-gon, generalizing similar work.
     The method we use can be applied to other geometric probability problems involving broken sticks, which are part of a long-standing class of recreational probability problems with several applications to real-world models. 
\end{abstract}

\maketitle

\section{Introduction}
The setup of a broken stick problem is to break a stick at random at $n-1$ points to obtain $n$ pieces. Then one can ask geometric probability questions involving the broken stick.
It is clear that this setup is equivalent to choosing $n-1$ numbers at random from the interval $(0,L)$, where $L>0$ is the length of the stick. Normalizing the stick to be of length one if needed, we may as well restrict our attention to the interval $(0,1)$. 
 But without further considerations, the probabilities one seeks in a broken stick problem seem nearly impossible to calculate. This point of view, however, shows that the broken stick problems are closely related to the random division of an interval, which has been extensively studied in the past (see \cite{darling_class_1953, holst_lengths_1980, pyke_spacings_1965,steutel_random_1967} for concrete examples). Interpreting random divisions of an interval as a Poisson point process has already been a prominent idea in some of these previous work, and it is the method we adapt here for broken stick problems.

As it was pointed out in the project reports \cite{kong_random_nodate,page_calculus_nodate} and in Darling's article \textit{On a class of problems related to the random division of an interval} \cite{darling_class_1953}, on top of being the source of interesting recreational probability problems, the broken stick model has applications in various areas such as the propagation of infectious diseases, traffic flow, finance, etc., and is a good match for a variety of real-world data sets. 
    The model of the broken stick is also used in statistical science \cite{ghent_application_1968} and in machine learning and can be used to develop new sampling methods.

    In this paper, for fixed integers $n\geq k\geq 3$, we let $\mathbb{P}_{k,n}$ stand for the probability of being able to form a $k$-gon with every choice of $k$ segments from the $n$ pieces of a broken stick. We will present a general approach to find these broken stick-type probabilities, its systematic application being rather straightforward from one problem to the next, while the combinatorial or geometric approaches one might think of at first have obvious limitations. 
 The main result we will prove using our method is the following, the proof of which will appear in Section \ref{proof}.
    \begin{thm}{}
    Let $n\geq k\geq 3$ be positive integers. The probability that every choice of $k$ segments from a stick broken up into $n$ parts will form a $k$-gon is given by
$$
\mathbb{P}_{k,n}=\frac{n(n-1)\cdots(n-k+3)}{n-k+2}\sum_{j=1}^{n-k+2}\frac{(-1)^{j+1}}{j^{k-3}}\binom{n-k+2}{j}\left(\frac{n-k+2}{j}+1\right)_{k-2}^{-1},
$$
where  $$\left(\frac{n-k+2}{j}+1\right)_{k-2}=\prod_{i=0}^{k-3}\left(\frac{n-k+2}{j}+1+i\right)$$ stands for the rising factorial function or Pochhammer symbol.
    \end{thm}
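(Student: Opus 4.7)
The plan is to follow the Poisson-process / exponential-spacings framework alluded to in the introduction. The first step is a purely combinatorial reduction: if the pieces are ordered as $X_{(1)}\leq\cdots\leq X_{(n)}$, then every $k$-subset forms a $k$-gon if and only if $X_{(n)}<X_{(1)}+\cdots+X_{(k-1)}$. Necessity is obtained by specializing to the subset consisting of the overall largest piece together with the $k-1$ smallest pieces; sufficiency follows because in any $k$-subset $S$ the maximum of $S$ is at most $X_{(n)}$, while the sum of the remaining $k-1$ elements of $S$ is at least $X_{(1)}+\cdots+X_{(k-1)}$.

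Next, I would pass to independent exponentials. The vector $(X_1,\ldots,X_n)$ of uniform spacings has the same law as $(Y_1/S,\ldots,Y_n/S)$, where the $Y_i$ are i.i.d.\ $\mathrm{Exp}(1)$ and $S=\sum Y_j$; since the inequality above is scale invariant, the probability equals $\mathbb{P}(Y_{(n)}<Y_{(1)}+\cdots+Y_{(k-1)})$. Using the R\'enyi representation $Y_{(j)}=D_1+\cdots+D_j$ with $D_i\sim\mathrm{Exp}(n-i+1)$ independent, a short rearrangement converts this into
\[
\sum_{j=k}^n D_j<\sum_{j=1}^{k-2}(k-1-j)\,D_j.
\]
The crucial observation is that the left-hand side is a sum of independent exponentials with rates $1,2,\ldots,n-k+1$, and hence has the same distribution as the maximum of $n-k+1$ i.i.d.\ $\mathrm{Exp}(1)$ variables; its c.d.f.\ is therefore $(1-e^{-b})^{n-k+1}$.

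Conditioning on $R:=\sum_{j=1}^{k-2}(k-1-j)D_j$ then yields
\[
\mathbb{P}_{k,n}=\mathbb{E}\bigl[(1-e^{-R})^{n-k+1}\bigr],
\]
which I would evaluate by a binomial expansion together with the independence of the $D_j$'s. The moment generating function of $R$ factors into
\[
\mathbb{E}[e^{-iR}]=\prod_{j=1}^{k-2}\frac{n-j+1}{(n-j+1)+i(k-1-j)},
\]
and after the substitution $\ell=k-1-j$ the denominator takes the Pochhammer shape $\prod_{\ell=1}^{k-2}(M+\ell(1+i))$ with $M=n-k+2$.

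The main obstacle will not be the probabilistic content, which is essentially routine once the max-of-exponentials identification is made, but the final combinatorial repackaging. The expression produced above is a sum $\sum_{i=0}^{n-k+1}(-1)^i\binom{n-k+1}{i}(\cdots)$ starting at $i=0$, whereas the theorem's sum runs over $j=1,\ldots,n-k+2$ with $\binom{n-k+2}{j}$, the factor $j^{k-3}$, and the Pochhammer symbol $(M/j+1)_{k-2}$. Matching the two requires absorbing the $i=0$ constant term $1$ into a new $j=1$ term, applying the identity $\binom{M-1}{j-1}=\frac{j}{M}\binom{M}{j}$, and rescaling $\prod_{\ell=1}^{k-2}(M+j\ell)=j^{k-2}\,(M/j+1)_{k-2}$; this bookkeeping is where careful attention is required.
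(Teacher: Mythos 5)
Your proposal is correct, and the route is genuinely different from (and in some ways cleaner than) the paper's. Both arguments share the same starting reductions -- the triangle-inequality criterion $X_{(n)} < X_{(1)} + \cdots + X_{(k-1)}$ (the paper's Lemma~2) and the passage to exponential order statistics via R\'enyi (the paper's Lemma~1) -- but they diverge afterward. The paper conditions on $(Y_{(1)},\dots,Y_{(k-1)})$, computes the conditional tail probability $(1-e^{-(x_1+\cdots+x_{k-2})})^{n-k+1}$ via a memorylessness argument, and then carries out a $(k-1)$-fold iterated integral against the joint density of the first $k-1$ order statistics; the innermost nest is handled by a separate technical lemma (the paper's Lemma~3, proved by an induction on $k$), and $k=3$ requires a separate direct computation. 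You instead use the full additive R\'enyi representation $Y_{(j)}=D_1+\cdots+D_j$, $D_i\sim\mathrm{Exp}(n-i+1)$ independent, rewrite the event as $\sum_{j=k}^n D_j < R$ with $R=\sum_{j=1}^{k-2}(k-1-j)D_j$, and exploit the independence of the two sides; the identification of $\sum_{j=k}^n D_j$ (rates $1,\dots,n-k+1$) with the maximum of $n-k+1$ i.i.d.\ $\mathrm{Exp}(1)$ variables yields $\mathbb{P}_{k,n}=\mathbb{E}[(1-e^{-R})^{n-k+1}]$, after which the binomial expansion and the factored MGF $\mathbb{E}[e^{-iR}]=\prod_{\ell=1}^{k-2}\frac{M+\ell}{M+\ell(1+i)}$ (with $M=n-k+2$) give a closed form. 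This sidesteps the iterated integrals and Lemma~3 entirely, and it handles $k=3$ uniformly rather than as a special case. The final bookkeeping you flagged -- shifting $m=i+1$, the identity $\binom{M-1}{m-1}=\frac{m}{M}\binom{M}{m}$, and the rescaling $\prod_{\ell=1}^{k-2}(M+\ell m)=m^{k-2}\bigl(\tfrac{M}{m}+1\bigr)_{k-2}$ -- does work out exactly to the stated formula (the prefactor emerging from $\prod_{\ell=1}^{k-2}(M+\ell)=n(n-1)\cdots(n-k+3)$), so the argument is complete.
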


We now mention some of the other works that have covered broken stick problems and give a brief summary of the history of these problems.
    Finding the probability of forming a triangle from a stick broken at random in three segments is a classic probability problem with answer $1/4$ that originated from the Mathematical Tripos. Although very geometric in nature, this problem was first approached with combinatorial techniques until Poincar\'e gave a geometric proof in his treatise \textit{Calcul des probabilit\'es} (to learn more about this, see \cite{goodman_problem_2008}).
    A natural generalization of this problem involving polygons already appeared in $1866$ in the \textit{Educational Times} \cite{clifford_problem_1866} : \textit{A line of length} $a$ \textit{is broken up into $n$ pieces at random; prove that the chance that they cannot be made into a polygon of} $n$ \textit{sides is} $n2^{1-n}$. A popular solution to this problem was given by D'Andrea and Gómez \cite{dandrea_broken_2006} in their note \textit{The Broken Spaghetti Noodle}, where they show that the probability of being able to form an $n$-gon out of the $n$ pieces of a broken stick is given by $1-n/2^{n-1}$. It is worth noting that this problem has a reformulation involving circles that can be stated as a question as follows : \textit{If $n$ points are randomly selected on the circumference of a circle, what is the probability that they will all fall within some semicircle?}
    Next, we would like to generalize these results by using $3\leq k\leq n$ pieces to form a $k$-gon out of the $n$ pieces.
    These problems then turn out to be much harder than expected and only a few have found an answer. In research project reports of the Illinois Geometry Lab \cite{kong_random_nodate, page_calculus_nodate}, two extreme cases were considered. The authors investigated the probability that given a stick broken up into $n$ pieces, there exist three segments that can form a triangle, as well as the probability that all choices of three segments can form a triangle. It was stated without proof that the answers are respectively $$1-n!\prod_{j=2}^n (F_{j+2}-1)^{-1} \text{ and } \binom{2n-2}{n}^{-1},$$ where $F_j$ is the $j$th Fibonacci number. 
    Crowdmath's $2017$ project\footnote{Crowdmath is an open project for high school and college students to collaborate on large research projects and is based on the model of Polymath. $2017$'s project was called \textit{The Broken Stick Problem}.} answered many broken stick type questions, but one of the only problems they could not solve was on forming polygons from a broken stick. They mention that this particular problem generalizes many of the results they obtained.
    More recently, the authors in \cite{petersen_broken_2020} have considered a discrete variant of the \textit{Broken Spaghetti Noodle} problem where they impose that the stick has integer length and can only be broken at integer increments.

    The formula given by Theorem $1$ is readily seen to simplify according to known results when $k=n$:
    \begin{align*}
        \mathbb{P}_{n,n}&=\frac{n(n-1)\cdots3}{2}\sum_{j=1}^2\frac{(-1)^{j+1}\binom{2}{j}}{j^{n-3}(2/j+1)_{n-2}}\\
        &=\frac{n!}{4}\left(\frac{2}{(3)_{n-2}}-\frac{1}{2^{n-3}(2)_{n-2}}\right).
    \end{align*}
    Since $(3)_{n-2}=n!/2$ and $(2)_{n-2}=(n-1)!$, we get
    \begin{align*}
        \mathbb{P}_{n,n}=\frac{n!}{4}\left(\frac{4}{n!}-\frac{4n}{2^{n-1}n!}\right)=1-\frac{n}{2^{n-1}}.
        \end{align*}
    It is not as obvious that when $k=3$, our formula agrees with the known results, but it will be shown that $\mathbb{P}_{3,n}=\binom{2n-2}{n}^{-1}$ holds and is also equal to the formula given by Theorem $1$ when $k=3$. 
    
    The present paper is organized as follows. A few preliminary results are presented in Section 2 along with the explanation and setup of the method. Theorem 1 is
proved in Section 3. More applications and open problems will be discussed in the Concluding remarks.

\section{Setting up the method}
\subsection{Notation}
   We use $X_1,\ldots, X_{n-1}$ to denote $n-1$ random and independent variables uniformly distributed on the interval $(0,1)$, and $Y_1,\ldots,Y_{n}$ for $n$ random and independent variables with exponential distribution of mean $1$.
   For a given set of random variables, say $Z_1,\ldots,Z_n$, we use $Z_{(k)}$ to denote the $k$th order statistic, that is the $k$th value in order of magnitude. In particular, $Z_{(1)}=\min\{Z_1,\ldots,Z_{n}\}$ and $Z_{(n)}=\max\{Z_1,\ldots,Z_{n}\}$. 
    Then, we let $\Delta_{1}, \Delta_{2},\ldots, \Delta_{n}$ be the differences of the $X_i$, that is
    $$\Delta_{i}=X_{(i)}-X_{(i-1)},$$ with $X_{(0)}$ and $X_{(n)}$ defined to be $0$ and $1$ respectively. These are often referred to as spacings.
    Finally, we use $\overset{d}{=}$ to denote equality in distribution.

\subsection{Preliminary results}
    The basis of our method relies on an observation made by A. Rényi in his seminal paper in the field of order statistics, simply called \textit{On the theory of order statistics} \cite{renyi_theory_1953}. He observed that 
    if we set $W_k=Y_1+\cdots+Y_k$ for $1\leq k\leq n$, then
    \begin{equation} \label{Renyi}
        (X_{(1)},X_{(2)},\ldots,X_{(n-1)})\overset{d}{=}(\frac{W_1}{W_n},\frac{W_2}{W_n},\ldots,\frac{W_{n-1}}{W_n}).
        \end{equation}
    This roughly says that the $n-1$ normalized exponential sums $W_1/W_n,\ldots,W_{n-1}/W_n$ in the interval $(0,1)$ behave like $n-1$ randomly selected points in $(0,1)$. 
    As it is well known (see \cite{pyke_spacings_1965} for instance), \eqref{Renyi} is in fact equivalent to the differences $\Delta_{1}, \ldots, \Delta_{n}$ and the variables $Y_1/W_n,\ldots,Y_n/W_n$ having the same distribution. Notice $W_n$ is gamma$(n,1)$ distributed and independent from $Y_i/W_n$, whence we also have the following.
    \begin{lemma}\label{lem main} With the random variables defined as before, we have
    $$(\Delta_{(1)}, \Delta_{(2)},\ldots, \Delta_{(n)})\overset{d}{=}(\frac{Y_{(1)}}{W_n},\frac{Y_{(2)}}{W_n},\ldots,\frac{Y_{(n)}}{W_n}).$$
    \end{lemma}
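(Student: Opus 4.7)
The plan is to deduce the lemma directly from the equality in distribution stated just above it, namely
$$
(\Delta_{1},\ldots,\Delta_{n})\overset{d}{=}\left(\frac{Y_1}{W_n},\ldots,\frac{Y_n}{W_n}\right),
$$
by applying a single deterministic, measurable map to both random vectors. The guiding principle is that equality in distribution is preserved under pushforward: if two $\R^{n}$-valued random vectors $U$ and $V$ satisfy $U\overset{d}{=}V$ and $\phi:\R^{n}\to\R^{n}$ is Borel-measurable, then $\phi(U)\overset{d}{=}\phi(V)$.

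First, I would take $\phi$ to be the coordinate-sorting map, which assigns to any vector $(z_1,\ldots,z_n)\in\R^{n}$ the vector $(z_{(1)},\ldots,z_{(n)})$ of its coordinates arranged in non-decreasing order. This map is continuous, hence Borel-measurable, so it is a valid choice for the pushforward argument. By definition, $\phi(\Delta_{1},\ldots,\Delta_{n})=(\Delta_{(1)},\ldots,\Delta_{(n)})$.

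Next, I would handle the right-hand side by observing that sorting commutes with multiplication by a positive scalar. Since $W_{n}=Y_{1}+\cdots+Y_{n}>0$ almost surely (each $Y_i$ is a positive random variable), the $i$th smallest entry of $(Y_{1}/W_{n},\ldots,Y_{n}/W_{n})$ is exactly $Y_{(i)}/W_{n}$, so
$$
\phi\!\left(\frac{Y_{1}}{W_{n}},\ldots,\frac{Y_{n}}{W_{n}}\right)=\left(\frac{Y_{(1)}}{W_{n}},\ldots,\frac{Y_{(n)}}{W_{n}}\right)
$$
almost surely, and therefore in distribution. Combining this with the first step yields the claim.

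The main point to emphasize is that essentially no probabilistic obstacle arises here: the lemma is a direct consequence of Rényi's identity once one accepts that a sorting map is a well-defined measurable function on $\R^{n}$. The real content of the result lives in the displayed identity taken from \cite{renyi_theory_1953, pyke_spacings_1965}; the lemma simply packages it in the form most convenient for the computations that will follow in Section \ref{proof}.
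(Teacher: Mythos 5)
Your argument is correct and matches what the paper leaves implicit: the paper invokes the unordered identity $(\Delta_1,\ldots,\Delta_n)\overset{d}{=}(Y_1/W_n,\ldots,Y_n/W_n)$ and then simply states ``whence we also have the following,'' which is precisely the pushforward-by-sorting step you spell out. One small observation worth making: the paper's aside that $W_n$ is gamma-distributed and independent of the $Y_i/W_n$ is not actually needed for this lemma (your proof shows it follows from measurability of the sorting map and $W_n>0$ a.s. alone); that independence remark is background the author records for later convenience rather than a hypothesis the proof relies on.
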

    
    The last result we state gives us a way to write $\mathbb{P}_{k,n}$ in terms of the $Y_{(i)}$ and is the starting point of the proof of Theorem $1$.
    \begin{lem} \label{lem triangle}
    The probability $\mathbb{P}_{k,n}$ is given by $\mathbb{P}(Y_{(n)} \leq Y_{(1)}+\cdots+Y_{(k-1)})$.
    \end{lem}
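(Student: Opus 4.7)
The plan is to translate the probabilistic statement into a deterministic condition on the spacings $\Delta_1,\ldots,\Delta_n$, then reduce the family of $\binom{n}{k}$ polygon inequalities to a single one, and finally apply Lemma \ref{lem main} to pass to the exponential order statistics.

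First, I recall the polygon inequality: a collection of $k$ positive lengths $a_1,\ldots,a_k$ can be arranged as the sides of a $k$-gon if and only if the largest length does not exceed the sum of the remaining $k-1$. Consequently, the event whose probability is $\mathbb{P}_{k,n}$ is the event that for every $k$-subset $S\subset\{1,\ldots,n\}$, $\max_{i\in S}\Delta_i \le \sum_{i\in S,\,i\neq\arg\max}\Delta_i$. (The case of equality has probability zero since the $\Delta_i$ have a continuous joint distribution, so whether the inequality is strict is immaterial.)

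The key step is the following reduction: all of these $\binom{n}{k}$ inequalities hold simultaneously if and only if the single inequality
\[
\Delta_{(n)}\le \Delta_{(1)}+\Delta_{(2)}+\cdots+\Delta_{(k-1)}
\]
is satisfied. One direction is immediate, since this is precisely the polygon condition applied to the subset $\{\Delta_{(n)},\Delta_{(1)},\ldots,\Delta_{(k-1)}\}$ consisting of the largest piece together with the $k-1$ smallest pieces. For the converse, suppose the displayed inequality holds and let $S$ be any $k$-subset with $\max_{i\in S}\Delta_i=\Delta_{(j)}$. Then the other $k-1$ elements of $S$ come from $\{\Delta_{(1)},\ldots,\Delta_{(j-1)}\}$, so their sum is bounded below by $\Delta_{(1)}+\cdots+\Delta_{(k-1)}$, which in turn dominates $\Delta_{(n)}\ge \Delta_{(j)}$. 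Hence the polygon condition holds for $S$. This is the step I expect to require the most care, as one has to verify that the extremal subset really is the one with the single largest and the $k-1$ smallest spacings.

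Finally, Lemma \ref{lem main} gives $(\Delta_{(1)},\ldots,\Delta_{(n)})\overset{d}{=}(Y_{(1)}/W_n,\ldots,Y_{(n)}/W_n)$. Since $W_n>0$ almost surely, clearing the common denominator yields
\[
\mathbb{P}_{k,n}=\mathbb{P}\bigl(\Delta_{(n)}\le \Delta_{(1)}+\cdots+\Delta_{(k-1)}\bigr)=\mathbb{P}\bigl(Y_{(n)}\le Y_{(1)}+\cdots+Y_{(k-1)}\bigr),
\]
which is the claimed identity.
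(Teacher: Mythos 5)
Your proof is correct and follows the same route as the paper: apply the polygon inequality, reduce all $\binom{n}{k}$ conditions to the single condition $\Delta_{(n)}\le\Delta_{(1)}+\cdots+\Delta_{(k-1)}$ (you simply spell out the ``worst case'' reduction that the paper states in one sentence), and then pass to the exponential order statistics via Lemma \ref{lem main} after clearing $W_n$.
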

    \begin{proof}
    A well known generalized triangle inequality for polygons says that for a set of lengths $x_1,x_2,\ldots,x_k$ to form a $k$-gon, it must be that for $i=1,\ldots,k$, \begin{equation*} x_i\leq\sum_{j\neq i}x_j.\end{equation*}
    This is equivalent to $$\max_{1\leq i\leq k}\{x_i\}\leq x_1+\cdots+x_k-\max_{1\leq i\leq k}\{x_i\}.$$
    Since the lengths $x_i$ are in our case represented by $\Delta_i$, we have that $\Delta_{(n)} \leq \Delta_{(1)}+\cdots+\Delta_{(k-1)}$ implies we can form a $k$-gon with $\Delta_{(1)},\ldots,\Delta_{(k-1)},\Delta_{(n)}$. But since we have put together the $k-1$ smallest pieces and the biggest one, it follows that all other choices of $k$ segments from $\Delta_{(1)},\ldots,\Delta_{(n)}$ will form a $k$-gon. Thus,
    $$
    \mathbb{P}_{k,n}=\mathbb{P}(\Delta_{(n)} \leq \Delta_{(1)}+\cdots+\Delta_{(k-1)}).
    $$
    By Lemma \ref{lem main}, this is equivalently given by $\mathbb{P}(Y_{(n)}/W_k \leq Y_{(1)}/W_k+\cdots+Y_{(k-1)}/W_k)$. Getting rid of the common factor, we are looking for $\mathbb{P}(Y_{(n)} \leq Y_{(1)}+\cdots+Y_{(k-1)})$.
    \end{proof}
\begin{remark}
Under this setup, we can give a quick proof that $\mathbb{P}_{n,n}=1-n/2^{n-1}$ which has not appeared in the literature.
\end{remark}
\begin{proof}
The proof of Lemma \ref{lem triangle} reveals that $\mathbb{P}_{n,n}=\mathbb{P}(\Delta_{(n)}\leq 1/2)$. The distribution of the maximum of the order statistics of these spacings has been well-studied. In fact, the survivor function of the biggest spacing is \begin{equation} \label{whit}
        \mathbb{P}(\Delta_{(n)}>x)=\sum_{j\geq 1, jx<1} (-1)^{j+1}(1-jx)^{n-1}\binom{n}{j},
    \end{equation} 
        a formula going back to Whitworth \cite[Exercise 667]{whitworth_choice_1901}.
        Now plug in $x=1/2$ to get $n/2^{n-1}$, which gives the result. 
\end{proof}
We provide a quick proof of \eqref{whit} for completeness. It is well-known that for non negative numbers $c_1,\ldots,c_n$,
        $$
    \mathbb{P}(\Delta_{1}>c_1,\ldots,\Delta_{n}>c_n)=\begin{cases}(1-\sum_{i=1}^n c_i)^{n-1} &\mbox{if } \sum c_i<1, \\ 0 &\mbox{otherwise}.
    \end{cases}
    $$
    Then the formula follows using the inclusion-exclusion principle.
    \begin{align*}
    \mathbb{P}(\Delta_{(n)}>x)=\mathbb{P}\left(\bigcup_{i=1}^n \Delta_{i}>x\right) 
    &=\sum_i \mathbb{P}(\Delta_i>x) - \sum_{j<i}\mathbb{P}(\Delta_i>x,\Delta_j>x)+\cdots \\
    &= \sum_{j\geq 1, jx<1} (-1)^{j+1}(1-jx)^{n-1}\binom{n}{j}.
    \end{align*}

\section{Proofs of main results} \label{proof}
We start with a Lemma of independent interest that gives an inequality that will show up in the proof of Theorem $1$.    

\begin{lemma} \label{lemma}
The following equality holds for all $k\geq 4$.
\begin{align*}
    \int_0^{\infty}e^{-(n-k+2)x_{k-2}}\int_{0}^{x_{k-2}}&\cdots\int_{0}^{x_3}e^{-j(2x_2+x_3+\cdots+x_{k-2})}-e^{-j(x_2+\cdots+x_{k-2})}
    \,d{x_2}\cdots d{x_{k-2}}\\
    &=\frac{-1}{j^{k-3}}\left(\frac{n-k+2}{j}+1\right)_{k-2}^{-1},
\end{align*}
with $\left(\frac{n-k+2}{j}+1\right)_{k-2}$ the rising factorial defined in the Introduction.
\end{lemma}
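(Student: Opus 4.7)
The plan is to evaluate this nested integral by peeling off one integration at a time, starting from the innermost $x_2$, and reducing the remaining problem to a Beta integral at the end.

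The starting point is the factorization
\begin{equation*}
e^{-j(2x_2+x_3+\cdots+x_{k-2})}-e^{-j(x_2+\cdots+x_{k-2})} = -(1-e^{-jx_2})\,e^{-j(x_2+\cdots+x_{k-2})},
\end{equation*}
which leads me to conjecture, and then prove by induction on $m$, that for $0 \leq m \leq k-4$, after integrating over $x_2,\ldots,x_{m+1}$ the integrand takes the self-similar form
\begin{equation*}
-\frac{(1-e^{-jx_{m+2}})^{m+1}}{(m+1)!\,j^m}\,e^{-j(x_{m+2}+\cdots+x_{k-2})}.
\end{equation*}
The base case $m=0$ is precisely the factorization above. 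The inductive step reduces to the substitution $u=1-e^{-jx_{m+2}}$, which works cleanly because the required $e^{-jx_{m+2}}$ factor is already sitting in the exponential tail; the resulting $\int_0^{x_{m+3}}(1-e^{-jx_{m+2}})^{m+1}e^{-jx_{m+2}}\,dx_{m+2}=\tfrac{(1-e^{-jx_{m+3}})^{m+2}}{(m+2)j}$ matches the proposed invariant at the next level.

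Setting $m=k-4$ collapses the original nested integral to the single integral
\begin{equation*}
-\frac{1}{(k-3)!\,j^{k-4}}\int_0^\infty (1-e^{-jx_{k-2}})^{k-3}\,e^{-(n-k+2+j)x_{k-2}}\,dx_{k-2},
\end{equation*}
which the substitution $t=e^{-jx_{k-2}}$ turns into $-\frac{1}{(k-3)!\,j^{k-3}}B\bigl(\tfrac{n-k+2}{j}+1,\,k-2\bigr)$. Using $\Gamma(k-2)=(k-3)!$ together with the Pochhammer identity $(a)_m=\Gamma(a+m)/\Gamma(a)$ then immediately reproduces the claimed right-hand side. The main obstacle in this plan is guessing the correct inductive invariant; once one notices that the first integration produces a perfect square in $1-e^{-jx_3}$, the proposed form becomes natural and the remaining computation is essentially mechanical.
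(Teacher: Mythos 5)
Your proof is correct, and it takes a genuinely cleaner route than the paper's. The paper proves by induction that the inner iterated integral equals the sum $\tfrac{1}{j^{k-4}}\sum_{i=1}^{k-2}\tfrac{(-1)^i}{(i-1)!(k-2-i)!}e^{-jix_{k-2}}$, a step that requires a separate combinatorial identity $\sum_{i=2}^{k-1}\tfrac{(-1)^i}{(i-1)!(k-1-i)!}=\tfrac{1}{(k-2)!}$; it then integrates term by term and converts the resulting numerical sum into the stated form using the integral representation $\tfrac{i}{i+a}=1-a\int_0^1 t^{a+i-1}\,dt$. You instead keep that sum in closed form throughout: your invariant $-\tfrac{(1-e^{-jx_{m+2}})^{m+1}}{(m+1)!\,j^m}e^{-j(x_{m+2}+\cdots+x_{k-2})}$ is exactly the binomial-theorem collapse of the paper's sum, and the self-similarity under $u=1-e^{-jx}$ makes the inductive step a one-line Beta-type antiderivative. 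This bypasses both the auxiliary combinatorial identity and the later conversion of a sum back into a Beta integral, landing directly on $-\tfrac{1}{(k-3)!\,j^{k-3}}B\bigl(\tfrac{n-k+2}{j}+1,\,k-2\bigr)$, which is the claimed answer. The only thing worth making explicit when you write this up is that $m=k-4$ exhausts precisely the inner $k-4$ integrations (over $x_2,\ldots,x_{k-3}$), so that the single remaining integral in $x_{k-2}$ is the outer one; and note that the degenerate case $k=4$ (no inner integrations) is exactly your base case $m=0$, so no special treatment is needed.
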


\begin{proof}
To start, we will prove that
\begin{equation}\label{1/(k-2)!}
\sum_{i=2}^{k-1}\frac{(-1)^i}{(i-1)!(k-1-i)!}=\frac{1}{(k-2)!}.
\end{equation}
To see this, rewrite the left hand side as $$\frac{1}{(k-2)!}\sum_{i=2}^{k-1}\binom{k-2}{i-1}(-1)^i.$$
Upon reindexing the sum, we see it is equal to 
$$
\frac{1}{(k-2)!}\left(-\sum_{i=0}^{k-2}\binom{k-2}{i}(-1)^i+1\right).
$$
By the binomial theorem, $-\sum_{i=0}^{k-2}\binom{k-2}{i}(-1)^i=-(1-1)^{k-2}=0$, so the result holds.

Next we will prove by induction on $k$ that for $k\geq 4$,
\begin{align}
    \int_{0}^{x_{k-2}}&\cdots\int_{0}^{x_3}e^{-j(2x_2+x_3+\cdots+x_{k-2})}-e^{-j(x_2+\cdots+x_{k-2})} \nonumber
    \,d{x_2}\cdots d{x_{k-3}}\\
    &=\frac{1}{j^{k-4}}\sum_{i=1}^{k-2}\frac{(-1)^i}{(i-1)!(k-2-i)!}e^{-j(ix_{k-2})} \label{induction}
\end{align}
The case $k=4$ is somewhat of a degenerate case as $e^{-2jx_2}-e^{-jx_2}$ is already under this form. If this is not satisfactory, it is readily seen that when $k=5$,
\begin{equation*}
    \int_0^{x_3}e^{-j(2x_2+x_3)}-e^{-j(x_2+x_3)}
    \,d{x_3}
    =\frac{1}{j}\left(-\frac{1}{2}e^{-j(x_3)}+e^{-j(2x_3)}-\frac{1}{2}e^{-j(3x_3)}\right).
\end{equation*}
Now suppose the result holds for some $k\geq 4$. We then get
\begin{align*}
    \int_{0}^{x_{k-1}}&\cdots\int_{0}^{x_2}e^{-j(2x_2+x_3+\cdots+x_{k-1})}-e^{-j(x_2+\cdots+x_{k-1})}
    \,d{x_2}\cdots d{x_{k-2}}\\
    &=\int_0^{x_{k-1}}e^{-jx_{k-1}}\frac{1}{j^{k-4}}\sum_{i=1}^{k-2}\frac{(-1)^i}{(i-1)!(k-2-i)!}e^{-j(ix_{k-2})}\,d{x_{k-2}}\\
    &=e^{-jx_{k-1}}\frac{1}{j^{k-3}}\sum_{i=1}^{k-2}\frac{(-1)^{i+1}}{i!(k-2-i)!}(e^{-j(ix_{k-1})}-1)\\
    &=\frac{1}{j^{k-3}}\sum_{i=2}^{k-1}\frac{(-1)^{i}}{(i-1)!(k-2-(i-1))!}(e^{-j(ix_{k-1})}-e^{-jx_{k-1}})\\
    &=\frac{1}{j^{k-3}}\sum_{i=2}^{k-1}\frac{(-1)^{i}}{(i-1)!(k-1-i))!}e^{-j(ix_{k-1})} - \frac{e^{-jx_{k-1}}}{j^{k-3}(k-2)!}\\
    &=\frac{1}{j^{k-3}}\sum_{i=1}^{k-1}\frac{(-1)^{i}}{(i-1)!(k-1-i)!}e^{-j(ix_{k-1})}.
\end{align*}
The second to last equality holds by \eqref{1/(k-2)!}, so that the result is proved by induction.

Now we multiply \eqref{induction} by $e^{-(n-k+2)x_{k-2}}$ and integrate from $0$ to $\infty$ with respect to $d{x_{k-2}}$ to get
\begin{align*}
    &\int_0^{\infty}e^{-(n-k+2)x_{k-2}}
    \frac{1}{j^{k-4}}\sum_{i=1}^{k-2}\frac{(-1)^i}{(i-1)!(k-2-i)!}e^{-j(ix_{k-2})}
    \,d{x_{k-2}}\\
    &=\frac{1}{j^{k-4}}\sum_{i=1}^{k-2}\frac{(-1)^i}{(i-1)!(k-2-i)!(n-k+2+ij)}.
\end{align*}
To finish the proof, it suffices to prove that 
$$
\sum_{i=1}^{k-2}\frac{(-1)^i}{(i-1)!(k-2-i)!(n-k+2+ij)}=\frac{-\Gamma\left(1+\frac{n-k+2}{j}\right)}{j\Gamma\left(k-1+\frac{n-k+2}{j}\right)},
$$
since the identity
$
   \Gamma(x+n)/\Gamma(x)=(x)_n
$
for $x$ and $n$ real numbers such that $x$ is not a negative integer is equivalent to the definition of the rising factorial.

To prove this, rewrite the left hand side as
$$
\frac{1}{j(k-2)!}\sum_{i=0}^{k-2}(-1)^i\binom{k-2}{i}\frac{i}{i+\frac{n-k+2}{j}}
$$
and observe 
$$\frac{i}{i+\frac{n-k+2}{j}}=1-\frac{n-k+2}{j}\int_{0}^1t^{\frac{n-k+2}{j}+i-1}\,dt.$$
Because $\sum_{i=0}^{k-2}(-1)^i\binom{k-2}{i}=(1-1)^{k-2}=0$, we are left with evaluating
$$
-\frac{\frac{n-k+2}{j}}{j(k-2)!}\int_0^1t^{\frac{n-k+2}{j}-1}\sum_{i=0}^{k-2}\binom{k-2}{i}(-t)^i\,dt,
$$
and since $\sum_{i=0}^{k-2}\binom{k-2}{i}(-t)^i=(1-t)^{k-2}$, this is simply
\begin{align*}
-\frac{\frac{n-k+2}{j}}{j(k-2)!}\frac{\Gamma(k-1)\Gamma(\frac{n-k+2}{j})}{\Gamma(\frac{n-k+2}{j}+k-1)}
=\frac{-\Gamma\left(1+\frac{n-k+2}{j}\right)}{j\Gamma\left(k-1+\frac{n-k+2}{j}\right)}.
\end{align*}


\end{proof}

With the previous Lemma in mind, we can proceed with the proof of Theorem $1$.
\subsection{Proof of Theorem 1}
By Lemma \ref{lem triangle},
$$\mathbb{P}_{k,n}=\mathbb{P}(Y_{(n)} \leq Y_{(1)}+\cdots+Y_{(k-1)}).$$
Using the law of total probability, we may write it as the $k-1$-iterated integral of 
\begin{multline*}
\mathbb{P}\left(Y_{(n)}\leq x_1+\cdots + x_{k-1} \mid (Y_{(1)},\ldots,Y_{(k-1)})=(x_1,\ldots,x_{k-1})\right) \\
    \times f_{(Y_{(1)},\ldots,Y_{(k-1)})}(x_1,\ldots,x_{k-1})    
\end{multline*} 
over the region $0<x_1\leq\ldots\leq x_{k-1}<\infty$.
We purposely choose to set up the bounds as
    \begin{equation*}
    \int_0^{\infty}\int_0^{x_{k-2}}\cdots\int_0^{x_2}\int_{x_{k-2}}^{\infty} \,d{x_{k-1}}\,d{x_1}\cdots  d{x_{k-2}}.
\end{equation*}
To evaluate the previous conditional probability, it is useful to use Lemma \ref{lem main} to interpret the $Y_i$ as given by $n-1$ points randomly chosen on the interval $(0,W_n)$, which produces $n$ segments. If we know that $Y_{(i)}=x_i$ for $1\leq i\leq k-1$, we might as well place $x_1,\ldots,x_{k-1}$ and $x_1+\cdots+x_{k-1}$ on the interval $(0,W_n)$. Now consider picking $n-k$ new points at random in $(x_{k-1},W_n)$ to produce $n-k+1$ segments, but look at them as $n-k+1$ i.i.d. exponential random variables with mean $1$, say $Y'_1,\ldots,Y'_{n-k+1}$. Now it is easy to see that $\max\{Y'_i\}$ falling in the interval $(x_{k-1},x_1+\cdots+x_{k-1})$ is equivalent to the event $Y_{(n)}\leq x_1+\cdots+x_{k-1}$ knowing that $Y_{(i)}=x_i$ for $1\leq i\leq k-1$. Indeed, the length of the interval is $x_1+\cdots+x_{k-2}$ and we have that $Y_{(n)}$ can be at most $$Y_{(k-1)}+\max\{Y'_i\}<x_1+\ldots+x_{k-1}$$ if $\max\{Y'_i\}$ is smaller than $x_1+\cdots+x_{k-2}$, while conversely 
$$\max\{Y'_i\}<Y_{(n)}-x_{k-1}<x_1+\cdots+x_{k-2}.$$ 
But $\max\{Y'_i\}$ is in $(x_{k-1},x_1+\cdots+x_{k-1})$ if and only if all the new segments fall in $(x_{k-1},x_1+\cdots+x_{k-1})$, or equivalently in $(0,x_1+\cdots+x_{k-2})$. Thus,
\begin{multline*}
    \mathbb{P}\left(Y_{(n)}\leq x_1+\cdots + x_{k-1} \mid (Y_{(1)},\ldots,Y_{(k-1)})=(x_1,\ldots,x_{k-1})\right)\\=\mathbb{P}(Y'_{1}\leq x_1+\cdots+x_{k-2},\ldots,Y'_{n-k+1}\leq x_1+\cdots+x_{k-2}).
\end{multline*}
Since the variables $Y'_i$ are i.i.d. exp($1$), we might as well write it as
$(\mathbb{P}(Y'_1\leq x_1+\cdots+x_{k-2}))^{n-k+1}=\left(1-e^{-(x_1+\cdots+x_{k-2})}\right)^{n-k+1}$.
This result is reminiscent of the memoryless property of the exponential distribution.

Also, it is a well-known result that the joint distribution of the $n$ order statistics of $n$ i.i.d. and continuous random variables with density $f(x)$ and distribution function $F(x)$ is given by
    $$f_{X_{(1)},\ldots,X_{(n)}}(x_1,\ldots,x_n)=n!\prod_{i=1}^nf(x_i),$$
    and one can integrate with respect to $dx_{j+1}\,dx_{j+2}\ldots dx_{n}$ to get the joint distribution of the first $j$ order statistics, namely
    $$f_{X_{(1)},\ldots,X_{(j)}}(x_1,\ldots,x_j)=n!\frac{(1-F(x_j))^{n-j}}{(n-j)!}\prod_{i=1}^{j}f(x_i).$$
Then we find that
    \begin{align*}
        f_{(Y_{(1)},\ldots,Y_{(k-1)})}(x_1,\ldots,x_{k-1})&=\frac{n!}{(n-(k-1))!}(1-(1-e^{-x_{k-1}}))^{n-(k-1)}e^{-x_1}\cdots e^{-x_{k-1}}\\
        &=n(n-1)\cdots (n-k+2)e^{-(x_1+\cdots+x_{k-2}+(n-k+2)x_{k-1})}.
    \end{align*}
 In other words, $\mathbb{P}_{k,n}$ is given by
    \begin{align*}
    \begin{split}
            &n(n-1)\cdots (n-k+2)\int_0^{\infty}\int_0^{x_{k-2}}\cdots\int_0^{x_2}\int_{x_{k-2}}^{\infty}
            \left(1-e^{-(x_1+\cdots+x_{k-2})}\right)^{n-k+1}\\
            &\times e^{-(x_1+\cdots+x_{k-2}+(n-k+2)x_{k-1})} \,d{x_{k-1}}\,d{x_1}\cdots  d{x_{k-2}}
            \end{split}\\
            \begin{split}
            &=n(n-1)\cdots (n-k+3)\int_0^{\infty}e^{-(n-k+2)x_{k-2}}\int_0^{x_{k-2}}\cdots\int_0^{x_2}
            \left(1-e^{-(x_1+\cdots+x_{k-2})}\right)^{n-k+1}\\
            &\times e^{-(x_1+\cdots+x_{k-2})} \,d{x_1}\cdots d{x_{k-2}}.
            \end{split}
        \end{align*}
Since the inner integrand is the derivative of $(1-e^{-(x_1+\cdots+x_{k-2})})^{n-k+2}$ up to a factor of $(n-k+2)^{-1}$, we get that $\mathbb{P}_{k,n}$ is equal 
to 
\begin{multline*}
\frac{n(n-1)\cdots(n-k+3)}{n-k+2}\int_0^{\infty}e^{-(n-k+2)x_{k-2}}\int_0^{x_{k-2}}\cdots\int_0^{x_3} 
\left(1-e^{-(2x_2+x_3+\cdots+x_{k-2})}\right)^{n-k+2}\\-\left(1-e^{-(x_2+\cdots+x_{k-2})}\right)^{n-k+2}
\,d{x_2}\cdots d{x_{k-2}}.
\end{multline*}
Now, we apply the binomial theorem to expand the inner integrand as 
\begin{equation*}
    \sum_{j=0}^{n-k+2}\binom{n-k+2}{j}(-1)^j\left(e^{-j(2x_2+x_3+\cdots+x_{k-2})}-e^{-j(x_2+\cdots+x_{k-2})}\right).
\end{equation*}
Noticing that when $j=0$, the summand equals $0$, we get that
\begin{multline*}
    \mathbb{P}_{k,n}=\frac{n(n-1)\cdots(n-k+3)}{n-k+2}
    \sum_{j=1}^{n-k+2}\binom{n-k+2}{j}(-1)^j 
    \int_0^{\infty}e^{-(n-k+2)x_{k-2}}\\ \times
    \int_0^{x_{k-2}}\cdots\int_0^{x_3} 
    e^{-j(2x_2+x_3+\cdots+x_{k-2})}-e^{-j(x_2+\cdots+x_{k-2})}
    \,d{x_2}\cdots d{x_{k-2}}.
\end{multline*}
Now the result follows from Lemma \ref{lemma}, as long as $k\geq 4$. To finish the proof, we show directly that 
\begin{equation} \label{Appendix}
\mathbb{P}_{3,n}=\binom{2n-2}{n}^{-1}=
\frac{n}{n-1}\sum_{j=1}^{n-1}\frac{(-1)^{j+1}\binom{n-1}{j}}{\frac{n-1}{j}+1},
\end{equation}
thus establishing the result for $k\geq 3$. 
The proof of the first equality in \eqref{Appendix} is still interesting in its own right as a simple application of our method for a fixed value of $k$. Since the setup of the method still holds for $k=3$, we may write
\begin{align*}
            \mathbb{P}_{3,n}
            =\int_0^{\infty}\int_0^v P\left(Y_{(n)}\leq u + v \mid (Y_{(1)},Y_{(2)})=(u,v)\right)f_{(Y_{(1)},Y_{(2)})}(u,v)dudv.
\end{align*}
Then proceeding as we did at the beginning of the proof of Theorem $1$, we have
\begin{align*}
    &\int_0^{\infty}\int_0^v P\left(Y_{(n)}\leq u + v \mid (Y_{(1)},Y_{(2)})=(u,v)\right)f_{(Y_{(1)},Y_{(2)})}(u,v)dudv \\
            &=\int_0^{\infty}\int_0^v (1-e^{-u})^{n-2}n(n-1)e^{-u-(n-1)v}dudv,
\end{align*}
which can be integrated directly as follows.
\begin{align*}
            \mathbb{P}_{3,n}
            &=\int_0^{\infty}ne^{-(n-1)v}\int_0^v(n-1)e^{-u}(1-e^{-u})^{n-2}dudv \\
            &=\int_0^{\infty}ne^{-(n-1)v}(1-e^{-v})^{n-1}dv \\
            &=n\int_0^1t^{n-2}(1-t)^{n-1}dt \\
            &=nB(n-1,n)\\
            &=\frac{(n-2)!n!}{(2n-2)!} \\
            &=\binom{2n-2}{n}^{-1},
        \end{align*}
    where in the fourth to last equality we made the substitution $t=e^{-v}$, and where $B(\cdot,\cdot)$ stands for the Beta function. Now it remains to show the second equality in \eqref{Appendix}. 
    Observe that 
    $$
    \frac{j}{n-1+j}=1-(n-1)\int_{0}^1t^{n-1+j-1}\,dt,
    $$
    from which the right hand side of $\eqref{Appendix}$ is simply
    \begin{align*}
    n\sum_{j=0}^{n-1}(-1)^j\binom{n-1}{j}\int_{0}^1t^{n-1+j-1}\,dt
    =n\int_0^1t^{n-2}(1-t)^{n-1}dt.
    \end{align*}
\begin{remark}It is interesting to write $\mathbb{P}_{3,n}$ as
$nB(n-1,n)$, because it should be clear that this result could be generalized to more values of $k$. As a matter of fact, it can be shown in the same way that $$\mathbb{P}_{4,n}=\frac{n(n-1)}{n-2}\left(\frac{1}{2} B\left(n-1,\frac{n-2}{2}\right)-B\left(n-1,n-2\right) \right)$$ 
and that \begin{multline*}\mathbb{P}_{5,n}=
\frac{n(n-1)(n-2)}{(n-3)^2}\\\times\left(\frac{1}{2}B\left(n-2,\frac{n-3}{3}\right)+\frac{1}{2}B(n-2,n-3)-B\left(n-2,\frac{n-3}{2}\right)\right).\end{multline*}
It is not clear what a general formula for $\mathbb{P}_{k,n}$ using Beta functions would look like yet, and when $k\geq 6$, the calculations become much more tedious and require switching the bounds of integration of at least $4$ iterated integrals. In hindsight, the formula given by Theorem $1$ is simpler to obtain but also simpler in its formulation. The only benefit a formula involving linear combinations of Beta functions would bring is potentially in its computational stability (notice that the number of Beta functions does not vary with $n$, but their value does, while the number of terms we sum over in our formula grows with $n$).
\end{remark}

    


\section{Concluding remarks}
The goal of our paper was first and foremost to expose this new method to solve broken stick probability problems. We applied it to one of these problems that seemed out of reach, but it would be interesting to apply it to many more. For example, finding the probability that there exist $k$ segments of an $n$-piece broken stick that can form a $k$-gon is an intriguing question, perhaps because the only known result is for $k=3$ and involves Fibonacci numbers, as it was stated in the Introduction. Our method could be applied in almost the same way, but Lemma \ref{lem triangle} would need to be changed. For instance, one would need to look at the union of the events $$\Delta_{(k+j)} \leq \sum_{i=1}^{k-1}\Delta_{(i+j)}, \qquad j=0,\ldots,n-k.$$ 
Another nice generalization that comes to mind immediately is that of forming a tetrahedron from a stick broken up into $6$ segments. Surely our method still applies, but it is a direct application of the generalized triangle inequality that fails us again. Known conditions to form polyhedra seem much harder to handle in practice. This problem is particularly interesting for its link with distance geometry, as it was noted in \cite{page_calculus_nodate}. They raised the following classic question in the field:
    \textit{Given $\binom{n}{2}$ positive numbers, can one find n points in space whose pairwise distances are the given numbers?} 
    This question has direct applications in wireless sensor networks and subjects like molecular biology, and the broken stick problems in $2$ or $3$ dimensions can be seen as random versions of this question
    which could even be extended to problems about pairwise distances of points in $R^m$ being the lengths of broken sticks.


\section*{Acknowledgments}
The author would like to thank Claude Bélisle, whose article \cite{belisle_polygon_2011} used similar ideas, for suggesting an approach to this problem.

\bibliographystyle{abbrv}
\bibliography{references.bib}

\end{document}